\titleformat{\subsection}[runin]
  {\normalfont\normalsize\bf}{\thesubsection}{1em}{}
	\def\MR#1{}
\newcommand{\bP}{\mathbb{P}}    
\newcommand{\bQ}{\mathbb{Q}}    
\newcommand{\bZ}{\mathbb{Z}}    
\newcommand{\cA}{\mathcal{A}}   
\newcommand{\cC}{\mathcal{C}}   
\newcommand{\cM}{\mathcal{M}}   
\newcommand{\cO}{\mathcal{O}}   
\newcommand{\Db}{\mathrm{D^b}}  
\newcommand{\FM}{\mathrm{FM}}   
\newcommand{\mukaiH}{\widetilde{H}} 
\newtheorem*{thm*}{Theorem}
\newtheorem*{prop*}{Proposition}
\newtheorem*{cor*}{Corollary}
\newtheorem{thm}{Theorem}[section]
\newtheorem{lemma}[thm]{Lemma}
\numberwithin{equation}{section}
\theoremstyle{definition}
\begin{document}
\title{Fourier--Mukai numbers of K3 categories of very general special cubic fourfolds}
\author{Yu-Wei Fan
    \and Kuan-Wen Lai}
\date{}

\newcommand{\ContactInfo}{{
\bigskip\footnotesize

\bigskip
\noindent Y.-W.~Fan,
\textsc{Yau Mathematical Sciences Center\\
Tsinghua University\\
Beijing 100084, P. R. China}\par\nopagebreak
\noindent\textsc{Email:} \texttt{ywfan@mail.tsinghua.edu.cn}

\bigskip
\noindent K.-W.~Lai,
\textsc{Department of Smart Computing and Applied Mathematics \\
Tunghai University \\
No.~1727, Sec.~4, Taiwan~Blvd., Xitun~Dist., Taichung~City 407224, Taiwan}
\par\nopagebreak
\smallskip
\noindent\textsc{National Center for Theoretical Science \\
No.~1, Sec.~4, Roosevelt~Rd., Taipei~City 106319, Taiwan}
\par\nopagebreak
\noindent\textsc{Email:} \texttt{kwlai@thu.edu.tw}
}}

\maketitle
\thispagestyle{titlepage}

\begin{abstract}
We give counting formulas for the number of Fourier--Mukai partners of the K3 category of a very general special cubic fourfold.
\end{abstract}


\section{Introduction}
\label{sect:intro}

The number of isomorphism classes of Fourier--Mukai partners of a very general complex algebraic K3 surface was computed by Oguiso \cite{Ogu02}*{Proposition~(1.10)}. The purpose of this paper is to establish similar counting formulas for the K3 category of a very general special cubic fourfold.

Let $X\subseteq\bP^5$ be a cubic fourfold, namely, a smooth complex cubic hypersurface. Then the bounded derived category of coherent sheaves on $X$ admits a semiorthogonal decomposition
$$
    \Db(X) = \left<
        \cA_X, \cO_X, \cO_X(1), \cO_X(2)
    \right>.
$$
The full triangulated subcategory $\cA_X$, called the \emph{K3 category of $X$}, is an example of a non-commutative K3 surface in the sense of \cite{MS19}*{Definition~2.31}. In contrast to its common usage in the literature, we say two cubic fourfolds are \emph{Fourier--Mukai partners}, or \emph{FM-partners} for short, if their K3 categories are equivalent. It is known that the number of FM-partners of a cubic fourfold up to isomorphism is finite \cite{Huy17}*{Theorem~1.1}. Moreover, this number is equal to $1$ if the lattice
$$
    H^{2,2}(X,\bZ)\coloneqq
    H^4(X,\bZ)\cap H^{2}(X,\Omega_X^2)
$$
has rank~$1$ \cite{Huy17}*{Theorem~1.5~(i)}.

A cubic fourfold is called \emph{special} if $H^{2,2}(X,\bZ)$ has rank at least $2$, or equivalently, if there exists a rank~$2$ saturated sublattice
$
    K\subseteq H^{2,2}(X,\bZ)
$
containing the square of the hyperplane class $h\coloneqq c_1(\cO_X(1))$. In the moduli space of cubic fourfolds, special members marked by such a sublattice $K$ with $\mathrm{disc}(K) = d$ form an irreducible divisor $\cC_d$, which is nonempty if and only if $d\geq 8$ and $d\equiv 0,2\pmod{6}$ \cite{Has00}*{Theorem~1.0.1}. We say a special cubic fourfold $X\in\cC_d$ is \emph{very general} if it is away from a union of countably many divisors. Note that $H^{2,2}(X,\bZ)$ has rank exactly $2$ for such an $X$.

To state the main theorem, let us first consider the ring $\bZ_{2d}$ of integers modulo $2d$ and denote the subset of square roots of unity as
$$
    \left(
        \bZ_{2d}^\times
    \right)_2 \coloneqq \left\{
        n\in\bZ_{2d}^\times
    \;\middle|\;
        n^2\equiv 1\pmod{2d}
    \right\}.
$$
Using the Chinese remainder theorem, one can verify that
$$
    \left|\left(
        \bZ_{2d}^\times
    \right)_2\right| = \begin{cases}
        4 & \text{if}
        \quad d = 2^{a+1} \\
        2^{k+1} & \text{if}
        \quad d = 2p_1^{e_1}\cdots p_k^{e_k} \\
        2^{k+2} & \text{if}
        \quad d = 2^{a+1}p_1^{e_1}\cdots p_k^{e_k}
    \end{cases}
$$
where $a,k\geq1$ and every $p_i$ is an odd prime.

\begin{thm}
\label{mainthm-1}
Let $X\in\cC_d$ be a very general member and $\FM(X)$ be the set of isomorphism classes of FM-partners of $X$.
\begin{itemize}
\item If $d\not\equiv 0 \pmod{3}$, then
$
    |\FM(X)| = \frac{1}{4}\left|\left(
        \bZ_{2d}^\times
    \right)_2\right|.
$
\item If $d\equiv 0 \pmod{3}$ and $d\not\equiv 0 \pmod{9}$, then
$
    |\FM(X)| = \frac{1}{8}\left|\left(
        \bZ_{2d}^\times
    \right)_2\right|.
$
\item If $d\equiv 0 \pmod{9}$ and $\frac{d}{18}\equiv1\pmod3$, then
$
    |\FM(X)| = \frac{1}{4}\left|\left(
        \bZ_{2d}^\times
    \right)_2\right|.
$
\item If $d\equiv 0 \pmod{9}$ and $\frac{d}{18}\equiv 2\pmod3$, then
$
    |\FM(X)| = \frac{1}{2}\left|\left(
        \bZ_{2d}^\times
    \right)_2\right|.
$
\item If $d\equiv 0 \pmod{27}$, then
$
    |\FM(X)| = \frac{3}{4}\left|\left(
        \bZ_{2d}^\times
    \right)_2\right|.
$
\end{itemize}
\end{thm}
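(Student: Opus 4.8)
The plan is to pass through the Derived Torelli theorem for the K3 categories $\cA_X$ in order to reinterpret $\FM(X)$ as a set of lattice embeddings, and then to evaluate that set with Nikulin's theory of discriminant forms, isolating the behaviour at the prime $3$ as the crux. I would begin with the Mukai--Hodge structure on $\mukaiH(\cA_X,\bZ)$, an even lattice of signature $(4,20)$, together with the Torelli-type statement that two very general $X,X'\in\cC_d$ are FM-partners exactly when there is a Hodge isometry $\mukaiH(\cA_X,\bZ)\cong\mukaiH(\cA_{X'},\bZ)$. For very general $X\in\cC_d$ the transcendental part $T_d\coloneqq\mukaiH^{1,1}(\cA_X,\bZ)^{\perp}$ is a rank-$22$ lattice of signature $(2,20)$ carrying the period, while its orthogonal complement $L_d$ is a \emph{positive definite} lattice of rank $2$; the first task is to compute both, together with their common discriminant group, which is cyclic of order $2d$, and the discriminant quadratic form $q$ on it.

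Since $\mukaiH(\cA_X,\bZ)$ is unimodular, Nikulin's classification of primitive embeddings identifies a marking of an FM-partner with a gluing isometry between the discriminant forms of $T_d$ and of $L_d$. Because $X$ is very general, the only \emph{Hodge} isometries of $T_d$ are $\pm\mathrm{id}$, so that only the scalar $-1\in\uO(q)$ is divided out on the transcendental side; on the algebraic side the full finite group $\uO(L_d)$ acts, since every isometry of the definite lattice $L_d$ preserves its trivial Hodge structure. Summing over the genus of $L_d$ and using that $\uO(q)$ is abelian, this reduces the count to
$$
    |\FM(X)| = \sum_{[M]\in\mathrm{genus}(L_d)}
    \bigl[\,\uO(q):H_M\,\bigr],
    \qquad H_M=\langle\, -1,\ \overline{\uO}(M)\,\rangle,
$$
where $\overline{\uO}(M)$ is the image of $\uO(M)$ in $\uO(q)$ and $H_M$ also absorbs the orientation/sign datum intrinsic to cubic fourfolds.

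The identification $\uO(q)\cong(\bZ_{2d}^\times)_2$ is then immediate: automorphisms of the cyclic group $\bZ/2d$ are multiplications by units, and compatibility with $q$ forces the unit to square to $1$ modulo $2d$. This is precisely what produces the factor $\bigl|(\bZ_{2d}^\times)_2\bigr|$ whose value is recorded before the theorem, and it reduces everything to understanding, for each class $[M]$ in the genus, the subgroup $H_M\subseteq\uO(q)$, which in the generic situation has order $4$ and so yields the base constant $\tfrac14$.

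What remains, and what constitutes the heart of the argument, is to determine $\overline{\uO}(L_d)$ and the class number of the genus of $L_d$; both are governed by the prime $3$, ultimately because $h^2\cdot h^2=3$ makes an $A_2$-summand control the local structure of $L_d$ at $3$. I would analyse $L_d$ prime by prime: away from $3$ the contributions are insensitive to $d$ and combine, via the Chinese remainder theorem, with the generic factor $\tfrac14$, whereas at $3$ the local form of $L_d$ — and with it both $\overline{\uO}(L_d)$ and the number of classes in the genus — changes as $d$ runs through $3\nmid d$, $3\mid d$ but $9\nmid d$, $9\mid d$ with $\tfrac{d}{18}\equiv1,2\pmod3$, and $27\mid d$, so that $|H_M|$ moves through $4,8,4,2$. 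The delicate point is the last case: here the definite binary lattice $L_d$ acquires $3$-torsion in its class group, so that $\mathrm{genus}(L_d)$ consists of more than one isometry class and the sum above gains an extra factor of $3$, producing $\tfrac34$ rather than a reciprocal power of $2$. Matching each local computation against $\bigl|(\bZ_{2d}^\times)_2\bigr|$ then yields the five stated cases, with the $27\mid d$ case — where the single-genus simplification breaks down — being the main obstacle.
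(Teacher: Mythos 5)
Your strategy founders on its lattice-theoretic input, which is incorrect in exactly the cases this theorem adds to the literature. For a very general $X\in\cC_d$ the algebraic part of the Mukai lattice is $N(\cA_X)$, of rank $3$ and signature $(2,1)$: it contains the positive definite $A_2$ spanned by $\lambda_1,\lambda_2$ together with a further class $\ell$ with $\ell^2=-6d'$ (where $d=18d'$). Your $L_d$ --- a rank-$2$ positive definite lattice --- is $H^{2,2}(X,\bZ)$, which sits in $H^4(X,\bZ)$, not in $\mukaiH(\cA_X,\bZ)$; accordingly your $T_d$ has the wrong rank and signature (it is of rank $21$ and signature $(2,19)$). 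Worse, the common discriminant group is not cyclic of order $2d$: by Lemma~\ref{lem:discT} it is $\bZ_3\oplus\bZ_{6d'}$, of order $d$, and it is non-cyclic precisely when $9\mid d$ --- the only cases left open by \cite{FL23}. Hence the step you call ``immediate,'' the identification $\uO(q)\cong(\bZ_{2d}^\times)_2$, fails throughout the range of the proof; moreover $\uO(q)$ need not be abelian for such a group, so your replacement of double cosets by a subgroup index breaks down as well. The factor $\bigl|(\bZ_{2d}^\times)_2\bigr|$ in the statement does not arise from any isomorphism with $\uO(q)$: the actual counts come out as multiples of $\bigl|(\bZ_{4d'}^\times)_2\bigr|$ and $\bigl|(\bZ_{12d'}^\times)_2\bigr|$, and are converted into $\bigl|(\bZ_{2d}^\times)_2\bigr|$ only at the very end by comparing prime factorizations.

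The mechanism you propose for the factor $3$ when $27\mid d$ is also not viable. All lattices relevant to the count are abstractly isometric: every $L\in\cM_{S,T}$ is an even lattice of signature $(2,20)$ with discriminant group $\bZ_3$, hence isometric to $A_2^{\perp}$ inside the unimodular $(4,20)$ lattice, which is unique in its genus; no genus with class number $3$ ever appears, and your $L_d$ (even once corrected to $N(\cA_X)$) is indefinite, so ``$3$-torsion in the class group of a definite binary lattice'' has no meaning here. What actually produces the $3$ is the gluing combinatorics forced by the non-cyclic discriminant group: FM-partners correspond ($2$-to-$1$, after a $4$-to-$1$ forgetful map from marked partners) to even overlattices $L\supseteq S\oplus T$ with $\mathrm{disc}(L)=3$ and $S,T$ saturated, and when $3\mid d'$ (equivalently $27\mid d$) there are three families of such gluings, indexed by $k\in\{0,1,2\}$ in Type~\ref{item:cyclic}, each contributing $\tfrac12\bigl|(\bZ_{4d'}^\times)_2\bigr|$; for $3\nmid d'$ only $k=0$ survives (plus Type~\ref{item:split} when $d'\equiv2\pmod 3$). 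Finally, the cubic situation is structurally different from the K3-surface HLOY count you are importing: all FM-partners here share a single Mukai Hodge structure, and what is counted is the number of ways a cubic (an $A_2$ plus gluing data) sits inside it, so a sum over the genus of the algebraic part never enters.
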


The special case when $d\not\equiv 0\pmod 9$ was proved in our earlier work \cite{FL23}*{Proposition~2.6}, where the formulas were used to help us find new examples of rational cubic fourfolds. Because of this, we need to treat only the case when $d\equiv 0\pmod 9$ here. This assumption implies that $d\equiv 0\pmod{18}$ and we will mostly work with
$$
    d' \coloneqq \frac{d}{18}
$$
instead of $d$ in this paper. By the theorem, if $d\equiv0\pmod{27}$, then
$$
    |\FM(X)|=3\cdot2^n
$$
for some $n\geq0$; otherwise, $|\FM(X)|$ is a power of $2$. It would be interesting to understand the origin of the factor $3$ from a geometric perspective. We remark that the case when $X$ has an associated K3 surface was studied previously by Pertusi \cite{Per21}*{Theorem~1.1}.

Our proof of the theorem is mainly based on the works of Addington--Thomas \cite{AT14} and Huybrechts \cite{Huy17} as these works turn the original problem into the counting of certain overlattices. In Section~\ref{sect:K3Cat}, we briefly review these background materials and set up necessary notations. In Section~\ref{sect:FM-overlat}, we introduce the overlattices arising naturally from FM-partners and explain their roles in the counting problem. In Section~\ref{sect:count-overlat}, we count the number of those overlattices and finish the proof of the main theorem.

\subsection*{Acknowledgements}
The second author was supported by the ERC Synergy Grant HyperK (ID: 854361) and is currently supported by the NSTC Research Grant (113-2115-M-029-003-MY3). We would also like to thank an anonymous referee for providing helpful suggestions.

\section{Mukai lattices of the K3 categories}
\label{sect:K3Cat}

The topological K-theory $K_{\rm top}(X)$ of a cubic fourfold is a free abelian group equipped with a natural integral bilinear pairing $\chi(\,\cdot\,,\,\cdot\,)$. It contains the subgroup
$$
    K_{\rm top}(\cA_X)
    \coloneqq\{
        \kappa\in K_{\rm top}(X)
    \mid
        \chi([\cO_X(i)],\kappa) = 0
        \;\text{ for }\; i = 0,1,2
    \},
$$
called the \emph{Mukai lattice of $\cA_X$}, and it has a polarized Hodge structure of K3 type. Indeed, the pairing $\chi(\,\cdot\,,\,\cdot\,)$ is symmetric on $K_{\rm top}(\cA_X)$ and so turns it into a lattice. On the other hand, the Mukai vector defines an embedding
$$
    K_{\rm top}(X)\lhook\joinrel\longrightarrow H^*(X,\bQ)
    : E\longmapsto\mathrm{ch}(E)\cdot\sqrt{\mathrm{td}(X)}
$$
which endows a Hodge structure on $K_{\rm top}(\cA_X)$ by taking the restriction of the Hodge structure on $H^*(X,\bZ)$. By \cite{Huy17}*{Theorem~1.5~(iii)} and \cite{LPZ23}*{Theorem~1.3}, two very general special cubic fourfolds are FM-partners if and only if there exists a Hodge isometry between their Mukai lattices.

Following our previous work \cite{FL23}, we define
$
    \mukaiH(\cA_X,\bZ)
    \coloneqq
    K_{\rm top}(\cA_X)(-1).
$
As an abstract lattice, it is unimodular of signature $(4,20)$, so we have
\begin{equation}
\label{eqn:MukaiLattice}
    \mukaiH(\cA_X,\bZ)
    \cong
    E_8(-1)^{\oplus 2}\oplus U^{\oplus 4}
    \qquad\text{where}\qquad
    U = \begin{pmatrix}
        0 & 1 \\
        1 & 0 
    \end{pmatrix}.
\end{equation}
Let us further define
$
    N(\cA_X)\coloneqq\mukaiH^{1,1}(\cA_X,\bZ)
$
and
$
    T(\cA_X)\coloneqq N(\cA_X)^{\perp\mukaiH(\cA_X,\bZ)}.
$
The projections of the classes $[\cO_{\rm line}(1)]$ and $[\cO_{\rm line}(2)]$ to $K_{\rm top}(\cA_X)$ induce two elements $\lambda_1,\lambda_2\in N(\cA_X)$ which span the sublattice
$$
    A_2(X)\coloneqq\left<
        \lambda_1, \lambda_2
    \right> \cong \begin{pmatrix}
        2 & -1 \\
        -1 & 2
    \end{pmatrix}
    \subseteq N(\cA_X).
$$
By \cite{AT14}*{Proposition~2.3}, there is a commutative diagram of Hodge structures
$$\xymatrix{
    A_2(X)^{\perp\mukaiH(\cA_X,\bZ)}
    \ar[r]^-\sim &
    \left<
        h^2
    \right>^{\perp H^4(X,\bZ)(-1)}
    = H^4(X,\bZ)_{\rm prim}(-1) \\
    T(\cA_X)
    \ar@{^(->}[u]\ar[r]^-\sim  &
    H^{2,2}(X,\bZ)^{\perp H^4(X,\bZ)(-1)}
    \ar@{^(->}[u]
}$$
where the horizontal maps are isomorphisms.

From now on, we fix a very general $X\in\cC_d$ with $d\equiv 0\pmod{9}$. Let us denote for short that
$
    T\coloneqq T(\cA_X)
$
and
$
    S \coloneqq N(\cA_X)\cap A_2(X)^{\perp N(\cA_X)}.
$
Then a direct computation using the fact that $d\equiv 0\pmod{6}$ gives
$$
    H^{2,2}(X,\bZ)\cong
    \begin{pmatrix}
        3 & 0 \\
        0 & 6d'
    \end{pmatrix}
    \qquad\text{whence}\qquad
    N(\cA_X)\cong
    \begin{pmatrix}
        2 & -1 & 0 \\
        -1 & 2 & 0 \\
        0 & 0 & -6d'
    \end{pmatrix}.
$$
Denote by $\{\lambda_1, \lambda_2, \ell\}$ the standard basis for $N(\cA_X)$ so that $\ell^2 = -6d'$. Then
\begin{equation}
\label{eqn:discS}
    S = \left<\ell\right>
    \qquad\text{whence}\qquad
    S^*/S = \left<\frac{\ell}{6d'}\right>
    \cong \bZ_{6d'}.
\end{equation}
Let us also give an explicit formula for the discriminant group of $T$.

\begin{lemma}
\label{lem:discT}
There exists $t_1, t_2\in T$ with $t_1^2 = -6$, $t_2^2 = 6d'$, and $t_1t_2 = 0$ such that
$$
    T^*/T = \left<
        \frac{t_1}{3}
    \right>\oplus\left<
        \frac{t_2}{6d'}
    \right>
    \cong \bZ_3\oplus\bZ_{6d'}.
$$
\end{lemma}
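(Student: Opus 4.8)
The plan is to reduce the computation of $T^*/T$ to that of $N(\cA_X)^*/N(\cA_X)$, and then to pin down the exact generators by replacing $T$ with an explicit isometric model.

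First I would exploit that the Gram matrix of $N(\cA_X)$ displayed above is block diagonal, so $N(\cA_X) = A_2(X)\oplus S$ is an orthogonal direct sum with $S = \langle\ell\rangle$ and $\ell^2 = -6d'$. Hence
$$
    N(\cA_X)^*/N(\cA_X)\cong\bigl(A_2(X)^*/A_2(X)\bigr)\oplus\bigl(S^*/S\bigr)\cong\bZ_3\oplus\bZ_{6d'},
$$
the second factor being computed in \eqref{eqn:discS}. A short computation records the discriminant quadratic form: the $\bZ_3$-summand is generated by $\tfrac13(\lambda_1-\lambda_2)$ with square $2/3$ modulo $2\bZ$, and the $\bZ_{6d'}$-summand by $\tfrac{\ell}{6d'}$ with square $-\tfrac{1}{6d'}$ modulo $2\bZ$, the two summands being orthogonal. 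Since $\mukaiH(\cA_X,\bZ)$ is unimodular and $N(\cA_X)$ is a primitive sublattice with $T = N(\cA_X)^{\perp}$, Nikulin's theory of discriminant forms supplies a canonical isomorphism $T^*/T\cong N(\cA_X)^*/N(\cA_X)$ under which the quadratic form is negated. Thus $T^*/T\cong\bZ_3\oplus\bZ_{6d'}$, with orthogonal cyclic generators of square $-2/3$ and $\tfrac{1}{6d'}$ modulo $2\bZ$.

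The main obstacle is that this abstract description determines each generator's square only modulo $2\bZ$, whereas the statement demands vectors $t_1,t_2\in T$ with the exact squares $t_1^2=-6$, $t_2^2=6d'$ and $t_1t_2=0$. To secure these I would produce a concrete even lattice realizing the invariants of $T$ together with the desired vectors at once. Set
$$
    T_0 \coloneqq A_2(-1)\oplus\langle 6d'\rangle\oplus U\oplus E_8(-1)^{\oplus2},
$$
let $t_2$ generate the summand $\langle 6d'\rangle$, let $a_1,a_2$ be the standard basis of $A_2(-1)$, and set $t_1\coloneqq a_1-a_2$. Then $t_1^2=-6$, $t_2^2=6d'$, and $t_1t_2=0$, while $\tfrac{t_1}{3}$ generates $A_2(-1)^*/A_2(-1)\cong\bZ_3$ with square $-2/3$ and $\tfrac{t_2}{6d'}$ generates $\langle 6d'\rangle^*/\langle 6d'\rangle\cong\bZ_{6d'}$ with square $\tfrac{1}{6d'}$. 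Because $U\oplus E_8(-1)^{\oplus2}$ is even unimodular of signature $(1,17)$, the lattice $T_0$ is even of signature $(2,19)$, matching $T$, and its discriminant form is the orthogonal sum of the two pieces just described, hence isometric to the form of $T$ computed above.

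Finally I would invoke the uniqueness of even indefinite lattices in their genus: here $T$ has rank $21$, which comfortably exceeds $2+\ell(T^*/T)=4$ since $\ell(T^*/T)=2$, so $T$ is determined up to isometry by its signature and discriminant form and therefore $T\cong T_0$. Transporting $t_1$ and $t_2$ through this isometry yields elements of $T$ with the asserted properties. The step requiring the most care is the verification that the discriminant form of $T_0$ agrees with that of $T$ on the nose — in particular that the $3$-torsion shared between the $\bZ_3$ and $\bZ_{6d'}$ factors is organized into the claimed orthogonal cyclic summands — which is precisely why the model is assembled from the orthogonal pieces $A_2(-1)$ and $\langle 6d'\rangle$ rather than from a single vector of square $-6$ (whose discriminant contribution would be $\bZ_6$ rather than $\bZ_3$).
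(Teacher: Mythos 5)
Your proof is correct, but it reaches the conclusion by a different route than the paper. The paper applies Nikulin's uniqueness theorem for primitive embeddings (\cite{Nik79}*{Theorem~1.14.4}) to $N(\cA_X)\hookrightarrow\mukaiH(\cA_X,\bZ)$: this lets the authors fix an isomorphism \eqref{eqn:MukaiLattice} in which $\lambda_1,\lambda_2,\ell$ are explicit vectors of $E_8(-1)^{\oplus 2}\oplus U^{\oplus 4}$, whereupon $T$ is read off by hand as $E_8(-1)^{\oplus 2}\oplus U\oplus A_2(-1)\oplus\bZ(6d')$ and $t_1,t_2$ are written down as concrete integral vectors. You instead work entirely on the complement side: you determine the discriminant form $q_T=-q_{N(\cA_X)}$ via Nikulin's complement theorem for primitive sublattices of unimodular lattices, assemble the abstract model $T_0=A_2(-1)\oplus\langle 6d'\rangle\oplus U\oplus E_8(-1)^{\oplus 2}$ carrying the desired vectors, and invoke uniqueness of indefinite even lattices in their genus (legitimate here: $T$ has signature $(2,19)$ and rank $21\geq 2+\ell(T^*/T)=4$) to produce an isometry $T\cong T_0$ through which $t_1,t_2$ are transported. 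The two arguments rest on the same numerical input, and in fact Nikulin's embedding-uniqueness theorem is deduced from the genus-uniqueness and $O(T)\to O(q_T)$ surjectivity statements, so your proof essentially unwinds the paper's citation one level. What the paper's version buys is explicit coordinates for $t_1,t_2$ inside the ambient Mukai lattice; what yours buys is transparency about exactly which invariants of $T$ (signature and discriminant form, nothing more) force the conclusion. Your supporting computations all check: the generator $\tfrac{1}{3}(\lambda_1-\lambda_2)$ of $q_{A_2}$ has square $2/3$, the generator of $q_S$ has square $-1/(6d')$, the signatures match, $\ell(T^*/T)=2$ since the $3$-torsion of $\bZ_3\oplus\bZ_{6d'}$ needs two generators, and your closing observation is the right one — a single vector of square $-6$ would contribute $\bZ_6$ to the discriminant, so the $A_2(-1)$ block is genuinely needed to realize the $\bZ_3$ summand orthogonally.
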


\begin{proof}
As a result of \cite{Nik79}*{Theorem~1.14.4}, we can fix an isomorphism \eqref{eqn:MukaiLattice} such that the basis elements for $N(\cA_X)$ are identified as
$$
    \lambda_1 = e_1 + f_1,
    \qquad
    \lambda_2 = e_2 + f_2 - e_1,
    \qquad
    \ell = e_3 - (3d')f_3
$$
where $\{e_i,f_i\}$, $i=1,2,3$, are the standard bases for the last three copies of $U$. This isomorphism identifies $T$ with the sublattice
$$
    E_8(-1)^{\oplus 2}\oplus U
    \oplus A_2(-1)
    \oplus \bZ(6d')
$$
where
$$
    A_2(-1) = \left<
        e_1 - f_1 - e_2,\,
        e_2 - f_2
    \right>
    \qquad\text{and}\qquad
    \bZ(6d') =
    \left<
        e_3 + (3d')f_3
    \right>.
$$
Take $t_1\coloneqq e_1 - f_1 - 2e_2 + f_2$ and $t_2\coloneqq e_3 + (3d')f_3$ respectively from these factors. Then a direct computation shows that they satisfy the requirements.
\end{proof}

\section{FM-partners and two types of overlattices}
\label{sect:FM-overlat}

Let us retain the setting from the previous section and define $\cM_{S,T}$ to be the set of even overlattices
$
    L\supseteq S\oplus T
$
with $\mathrm{disc}(L) = 3$ such that $S,T\subseteq L$ are both saturated. Our goal is to turn the original counting problem to the counting on the set $\cM_{S,T}$. Let us start by showing that the elements of $\cM_{S,T}$ can be divided into two types.

Using $[S^*:S] = 6d'$ and $[T^*:T] = 18d'$, one can deduce from the chain of inclusions 
$$
    S\oplus T
    \subseteq L
    \subseteq L^*
    \subseteq S^*\oplus T^*
$$
that
$$
    [L\colon S\oplus T] = [S^*\oplus T^*:L^*] = 6d'.
$$
Now, as $S, T\subseteq L$ are saturated, the projections
\begin{equation}
\label{eqn:surj-to-S*/S}
    L^*/(S\oplus T)
    \longrightarrow
    S^*/S\cong\bZ_{6d'}
\end{equation}
\begin{equation}
\label{eqn:surj-to-T*/T}
    L^*/(S\oplus T)
    \longrightarrow
     T^*/T\cong\bZ_3\oplus\bZ_{6d'}
\end{equation}
are both surjective. In particular, the second map is an isomorphism as the orders of the two groups are both $18d'$. Hence, there exist integers $b_1$ and $b_2$ such that
\begin{equation}
\label{eqn:L*/(S+T)}
    L^*/(S\oplus T) = \left<
        \frac{b_1\ell + t_1}{3}
    \right>\oplus\left<
        \frac{b_2\ell + t_2}{6d'}
    \right>
\end{equation}
where $\ell,t_1,t_2$ are as in \eqref{eqn:discS} and Lemma~\ref{lem:discT}.

The subgroup $L/(S\oplus T)\subseteq L^*/(S\oplus T)$ has index $3$, so its image under \eqref{eqn:surj-to-T*/T} followed by the projection to the factor $\bZ_{6d'}$ is either $\bZ_{2d'}$ or $\bZ_{6d'}$. We can use \eqref{eqn:L*/(S+T)} to deduce an explicit expression for $L/(S\oplus T)$ in each of the two cases as follows:
\begin{enumerate}[label=\textup{(\Roman*)}]
    \item\label{item:split}
    If the image is $\bZ_{2d'}$, then
    $$
        L/(S\oplus T) = \left<
            \frac{b_1\ell + t_1}{3}
        \right>\oplus\left<
            \frac{b_2\ell + t_2}{2d'}
        \right>.
    $$
    In view of the surjectivity of \eqref{eqn:surj-to-S*/S}, we may assume that
    \begin{equation}
    \label{eqn:type-I}
        0\leq b_1 < 3
        \qquad\text{and}\qquad
        0\leq b_2 < 2d'
        \quad\text{with}\quad
        \gcd(b_1,3) = \gcd(b_2, 2d') = 1.
    \end{equation}
    
    \item\label{item:cyclic}
    If the image is $\bZ_{6d'}$, then
    $$
        L/(S\oplus T) = \left<
            \frac{kt_1}{3} + \frac{b_3\ell + t_2}{6d'}
        \right>
        \quad\text{for some}\quad
        0\leq k < 3.
    $$
    Here $b_3 = 2d'kb_1 + b_2$. In view of the surjectivity of \eqref{eqn:surj-to-S*/S}, we may assume that
    \begin{equation}
    \label{eqn:type-II}
        0\leq b_3 < 6d'
        \quad\text{with}\quad
        \gcd(b_3, 6d') = 1.
    \end{equation}
\end{enumerate}

In order to relate the set $\cM_{S,T}$ to the set of FM-partners, let us consider the set $\widetilde{\FM}(X)$ of triples $(Y, \phi, \psi)$ where
\begin{itemize}
\item $Y\in\FM(X)$,
\item
$
    \phi\colon S
    \longrightarrow
    S_Y \coloneqq N(\cA_Y)\cap A_2(Y)^{\perp N(\cA_Y)}
$
    is an isometry of rank~$1$ lattices,
\item 
$
    \psi\colon T
    \longrightarrow
    T_Y\coloneqq T(\cA_Y)
$
is a Hodge isometry.
\end{itemize}

\begin{lemma}
\label{lem:FM-triple-to-FM}
The forgetful map
$$
    \widetilde{\FM}(X)\longrightarrow\FM(X)
    : (Y,\phi,\psi)\longmapsto Y
$$
is $4$-to-$1$. Therefore, we have
$
    |\FM(X)| = \frac{1}{4}|\widetilde{\FM}(X)|.
$
\end{lemma}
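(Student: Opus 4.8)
The plan is to fix an arbitrary $Y\in\FM(X)$ and show that the fiber of the forgetful map over it has exactly four elements; the stated identity then follows at once. A triple over $Y$ amounts to a pair consisting of an isometry $\phi\colon S\to S_Y$ and a Hodge isometry $\psi\colon T\to T_Y$, chosen independently of one another, so this fiber is the product $\mathrm{Isom}(S,S_Y)\times\mathrm{Isom}_{\mathrm{Hdg}}(T,T_Y)$. Once one element of each factor is fixed, the first becomes a torsor under the orthogonal group $\uO(S)$ and the second a torsor under the group $\uO_{\mathrm{Hdg}}(T)$ of Hodge auto-isometries of $T$. Hence it suffices to prove that both factors are nonempty and that $|\uO(S)|=2$ and $|\uO_{\mathrm{Hdg}}(T)|=2$.

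For nonemptiness, \cite{Huy17}*{Theorem~1.5~(iii)} provides a Hodge isometry $\mukaiH(\cA_X,\bZ)\cong\mukaiH(\cA_Y,\bZ)$. It carries the $(1,1)$-part to the $(1,1)$-part, hence restricts to a lattice isomorphism $N(\cA_X)\cong N(\cA_Y)$ and, on orthogonal complements, to a Hodge isometry $T=N(\cA_X)^{\perp}\to N(\cA_Y)^{\perp}=T_Y$; thus $\mathrm{Isom}_{\mathrm{Hdg}}(T,T_Y)\neq\emptyset$. Since this isometry preserves the isometry class of $N$, the partner $Y$ is again a very general member of $\cC_d$, and the computation of Section~\ref{sect:K3Cat} applied to $Y$ gives $S_Y\cong\langle-6d'\rangle\cong S$, so that $\mathrm{Isom}(S,S_Y)\neq\emptyset$. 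Finally $|\uO(S)|=2$ is immediate, since $S=\langle\ell\rangle$ has rank $1$ and any isometry sends $\ell$ to $\pm\ell$.

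The crux, and the step I expect to be the main obstacle, is the equality $\uO_{\mathrm{Hdg}}(T)=\{\pm\mathrm{id}\}$, where the very-generality of $X$ must enter. Let $g\in\uO_{\mathrm{Hdg}}(T)$. Because $\dim_{\bC}T^{2,0}=1$ and $g$ preserves the Hodge decomposition, it acts on the period line $T^{2,0}=\bC\omega$ by a scalar $\zeta$, so $g\omega=\zeta\omega$. If $\zeta=\pm1$, then $\omega$ lies in the complexification of the primitive sublattice $\ker(g-\zeta\,\mathrm{id})\subseteq T$; since $T$ is the transcendental lattice, i.e.\ the smallest primitive sublattice whose complexification contains $\omega$, this forces $\ker(g-\zeta\,\mathrm{id})=T$ and hence $g=\zeta\,\mathrm{id}=\pm\mathrm{id}$. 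If instead $\zeta\neq\pm1$, then $g\neq\zeta\,\mathrm{id}$ as $g$ is integral, so the $\zeta$-eigenspace of $g$ is a proper subspace of $T_{\bC}$; containing $\omega$, it places the period of $X$ on the proper linear section of the period domain that it cuts out. Running over the countably many isometries of the abstract lattice $T$ and their eigenvalues different from $\pm1$ yields a countable union of such proper subvarieties, which the period of the very general $X$ avoids. Thus $\zeta\neq\pm1$ cannot occur, and $\uO_{\mathrm{Hdg}}(T)=\{\pm\mathrm{id}\}$. Combining the three facts gives exactly $2\cdot2=4$ triples over each $Y$, so the forgetful map is $4$-to-$1$ and $|\FM(X)|=\tfrac14|\widetilde{\FM}(X)|$.
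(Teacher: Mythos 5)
Your reduction of the fiber over $Y$ to the product $\mathrm{Isom}(S,S_Y)\times\mathrm{Isom}_{\mathrm{Hdg}}(T,T_Y)$, the nonemptiness of both factors via \cite{Huy17}*{Theorem~1.5~(iii)}, and the count $|\uO(S)|=2$ all match the paper's proof, which records the fiber as $(Y,\pm\phi,\pm\psi)$. The one genuinely nontrivial input, $\uO_{\mathrm{Hdg}}(T)=\{\pm\mathrm{id}\}$, is handled in the paper by citing \cite{Huy16}*{Corollary~3.3.5}; you attempt to prove it directly, and your case $\zeta=\pm1$ (minimality of $T$ among primitive sublattices whose complexification contains $\omega$) is correct. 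The case $\zeta\neq\pm1$, however, has a real gap.

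For $\zeta\neq\pm1$ you argue that the period of $X$ would lie on one of countably many proper eigenspace loci, "which the period of the very general $X$ avoids." But in this paper \emph{very general} is the lattice-theoretic condition that $H^{2,2}(X,\bZ)$ has rank exactly $2$, and the lemma (hence the main theorem) must hold for \emph{every} such $X\in\cC_d$. Being rank-$2$ means avoiding the Noether--Lefschetz loci; it does not formally imply avoiding your eigenspace loci, and establishing that implication is exactly the content of the statement you are proving, so the argument begs the question. The implication is moreover not a general fact about K3-type Hodge structures: in even rank there exist Hodge structures of minimal Picard rank carrying extra Hodge isometries (CM points, which are dense in the period domain), so no dimension count alone can work. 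Any correct proof must use that $\mathrm{rank}\,T=24-3=21$ is \emph{odd}, which your argument never does. The repair uses your own minimality observation: a Hodge isometry $g$ preserves the positive-definite plane $(T^{2,0}\oplus T^{0,2})\cap T_\bR$ and its negative-definite orthogonal complement, so it lies in the intersection of a compact group with the discrete group $\uO(T)$ and hence has finite order, making $\zeta$ a primitive $n$-th root of unity; then $\ker\Phi_n(g)$, with $\Phi_n$ the $n$-th cyclotomic polynomial, is a primitive sublattice whose complexification contains $\omega$, so by minimality it equals $T$; thus $\Phi_n(g)=0$ and $T$ is a torsion-free $\bZ[\zeta_n]$-module, whence $\varphi(n)\mid 21$; since $\varphi(n)$ is even for $n\geq3$, this forces $n\leq 2$ and $g=\pm\mathrm{id}$. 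This argument (in essence the cited \cite{Huy16}*{Corollary~3.3.5}) is what your proof is missing.
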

\begin{proof}
For each $Y\in\FM(X)$, there are exactly two isometries
$
    \phi\colon S\longrightarrow S_Y
$
which are different by a sign. On the other hand, there exists a Hodge isometry 
$
    \psi\colon T\longrightarrow T_Y
$
due to \cite{Huy17}*{Theorem~1.5~(iii)}. Because the only Hodge isometries on $T$ are $\pm 1$ \cite{Huy16}*{Corollary~3.3.5}, the only Hodge isometries from $T$ to $T_Y$ are $\pm\psi$. This shows that the preimage over $Y\in\FM(X)$ consists of $(Y,\pm\phi,\pm\psi)$, so the statement follows.
\end{proof}

For each $(Y,\phi,\psi)\in\widetilde{\FM}(X)$, one can verify that the pullback
$$
    L_{(Y,\phi,\psi)}\coloneqq
    (\phi\oplus\psi)^*\left(
        A_2(Y)^{\perp\mukaiH(\cA_Y,\bZ)}
    \right)
    \subseteq S^*\oplus T^*
$$
is an element of $\cM_{S,T}$. This defines a map
\begin{equation}
\label{eqn:FM-triple-to-Mst}
    \widetilde{\FM}(X)\longrightarrow\cM_{S,T}
    : (Y,\phi,\psi)\longmapsto L_{(Y,\phi,\psi)}.
\end{equation}
Due to the Torelli theorem \cite{Voi86}, if $(Y,\phi,\psi)$ and $(Y',\phi',\psi')$ have the same image under this map, then $Y\cong Y'$. Hence, the forgetful map in Lemma~\ref{lem:FM-triple-to-FM} factors as
$$\xymatrix{
    \widetilde{\FM}(X)\ar[dr]_-{4:1}\ar[r]^-{L_\bullet}
    & \cM_{S,T}\ar[d] \\
    & \FM(X).
}$$
This shows that every fiber of \eqref{eqn:FM-triple-to-Mst} is contained in a fiber of the forgetful map. Moreover, the map is surjective due to the surjectivity of the period map \cite{Laz10}*{Theorem~1.1}; see the end of \cite{FL23}*{Proof of Lemma~2.7} for the details.

\begin{lemma}
\label{lem:FM-overlat}
The map \eqref{eqn:FM-triple-to-Mst} is $2$-to-$1$. As a result, we have
$
    |\FM(X)| = \frac{1}{2}|\cM_{S,T}|.
$
\end{lemma}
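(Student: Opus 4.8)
The plan is to track how the overlattice $L_{(Y,\phi,\psi)}$ depends on the signs of $\phi$ and $\psi$. Fix $Y\in\FM(X)$ and one pair $(\phi,\psi)$, and write $M_Y\coloneqq A_2(Y)^{\perp\mukaiH(\cA_Y,\bZ)}$, so that $L_{(Y,\phi,\psi)}=(\phi\oplus\psi)^{*}(M_Y)$ is the preimage of $M_Y$ under the rational extension of $\phi\oplus\psi$. By Lemma~\ref{lem:FM-triple-to-FM} the fiber of the forgetful map over $Y$ is exactly $\{(Y,\pm\phi,\pm\psi)\}$, and since every fiber of \eqref{eqn:FM-triple-to-Mst} is contained in a fiber of the forgetful map, it suffices to compute the four images $L_{(Y,\pm\phi,\pm\psi)}$. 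Letting $\iota$ denote the isometry $-1_S\oplus 1_T$ of $S^{*}\oplus T^{*}$ and abbreviating $L\coloneqq L_{(Y,\phi,\psi)}$, the first step is the identity of rational maps $(-\phi)\oplus\psi=(\phi\oplus\psi)\circ\iota$ and $(-\phi)\oplus(-\psi)=-(\phi\oplus\psi)$; taking preimages and using that pulling back along $-1$ fixes every subgroup gives
$$
    L_{(Y,-\phi,-\psi)} = L,
    \qquad
    L_{(Y,-\phi,\psi)} = L_{(Y,\phi,-\psi)} = \iota(L).
$$

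Granting this, the four triples over $Y$ map to only two overlattices, $L$ and $\iota(L)$, each attained by exactly two of them. Since the fibers of \eqref{eqn:FM-triple-to-Mst} lie inside the forgetful fiber, it follows that \eqref{eqn:FM-triple-to-Mst} is $2$-to-$1$ precisely when $L\neq\iota(L)$; combined with the surjectivity recorded before the lemma and with Lemma~\ref{lem:FM-triple-to-FM}, this yields $|\FM(X)|=\tfrac14|\widetilde{\FM}(X)|=\tfrac12|\cM_{S,T}|$.

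The crux, and the step I expect to be the main obstacle, is therefore to exclude $\iota(L)=L$. Here I would feed in the explicit glue groups $L/(S\oplus T)$ from cases \ref{item:split} and \ref{item:cyclic}, using that $\iota$ acts on $S^{*}/S\oplus T^{*}/T$ by negating the $\ell$-coordinate while fixing $t_1$ and $t_2$. In Type~\ref{item:split}, demanding $\iota\big(\tfrac{b_1\ell+t_1}{3}\big)\in L/(S\oplus T)$ forces, upon comparing $t_2$-components, the generator of the order-$3$ summand to match itself, hence $\tfrac{-2b_1\ell}{3}\in S\oplus T$, i.e.\ $3\mid b_1$, contradicting $\gcd(b_1,3)=1$. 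In Type~\ref{item:cyclic}, writing $\iota(g)=n\,g$ for the cyclic generator $g$ and comparing the $t_2$- and $\ell$-components forces simultaneously $n\equiv 1$ and $n\equiv -1\pmod{6d'}$, which is impossible because $6d'\geq 6$.

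In either case $\iota(L)\neq L$, so each forgetful fiber splits into the two $L_\bullet$-fibers $\{(Y,\phi,\psi),(Y,-\phi,-\psi)\}$ and $\{(Y,-\phi,\psi),(Y,\phi,-\psi)\}$, each of size $2$. This establishes that \eqref{eqn:FM-triple-to-Mst} is $2$-to-$1$ and gives the stated count. The only subtlety to handle carefully is the bookkeeping of preimages versus images when transporting $M_Y$ through the sign-changed isometries; once the two displayed identities are in place, the rest is the finite, case-by-case lattice check above.
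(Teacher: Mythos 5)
Your proposal is correct and follows essentially the same route as the paper: reduce via $L_{(Y,-\phi,-\psi)}=L_{(Y,\phi,\psi)}$ to showing $L\neq\iota(L)$, then rule out equality in Types~\ref{item:split} and~\ref{item:cyclic} by comparing components against the conditions $\gcd(b_1,3)=1$ and $\gcd(b_3,6d')=1$. Your Type~\ref{item:cyclic} argument, allowing a general unit $n$ with $\iota(g)=ng$ rather than only $\pm1$, is a slightly more careful version of the same computation the paper performs.
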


\begin{proof}
It is easy to see that $L_{(Y,\phi,\psi)} = L_{(Y,-\phi,-\psi)}$ for every $(Y,\phi,\psi)\in\widetilde{\FM}(X)$. As a consequence, the fiber $\{(Y,\pm\phi,\pm\psi)\}\subseteq\widetilde{\FM}(X)$ over each $Y\in\FM(X)$ is mapped by \eqref{eqn:FM-triple-to-Mst} as the set
$
    \{
        L_{(Y,\phi,\psi)}, L_{(Y,-\phi,\psi)}
    \}.
$
To prove the statement, it suffices to verify $L_{(Y,\phi,\psi)} \neq L_{(Y,-\phi,\psi)}$, that is, one gets a different lattice after replacing $\ell$ with $-\ell$. 

First suppose that $L\coloneqq L_{(Y,\phi,\psi)}$ satisfies~\ref{item:split}. Assume, to the contrary, that $L$ remains the same after replacing $\ell$ with $-\ell$. This assumption implies that
$$
    \frac{b_1\ell + t_1}{3}
    = \pm\left(
        \frac{- b_1\ell + t_1}{3}
    \right)
$$
within the $\bZ_3$ factor of $L/(S\oplus T)\cong\bZ_3\oplus\bZ_{2d'}$, which gives $\frac{2b_1}{3}\ell = 0$ or $\frac{2}{3}t_1 = 0$. Note that the latter case does not occur. In the former case, we get $b_1\equiv 0\pmod{3}$, but this contradicts the fact that $\gcd(b_1,3)=1$.

Now suppose that $L$ satisfies \ref{item:cyclic}. Assume again to the contrary that $L$ remains the same after replacing $\ell$ with $-\ell$. In this case, we get
$$
    \frac{kt_1}{3} + \frac{b_3\ell + t_2}{6d'}
    = \pm\left(
        \frac{kt_1}{3} + \frac{- b_3\ell + t_2}{6d'}
    \right)
    \;\in\; L/(S\oplus T)\cong\bZ_{6d'},
$$
which implies
$
    \frac{b_3}{3d'}\ell = 0
$
or
$
    \frac{2k}{3}t_1 + \frac{2}{3d'}t_2 = 0.
$
The latter case does not occur. In the former case, we get $b_3\equiv 0\pmod{3d'}$, but this contradicts the fact that $\gcd(b_3, 6d') = 1$.

We have proved the first statement, which gives $|\widetilde{\FM}(X)| = 2|\cM_{S,T}|$. It then follows from Lemma~\ref{lem:FM-triple-to-FM} that
$
    |\FM(X)| 
    = \frac{1}{4}|\widetilde{\FM}(X)|
    = \frac{1}{2}|\cM_{S,T}|.
$
\end{proof}

\section{Counting the number of overlattices}
\label{sect:count-overlat}

It remains to count the number of elements in $\cM_{S,T}$, or more precisely, to count the number of lattices of the forms \ref{item:split} and \ref{item:cyclic} which are even. Let us start by proving a basic property about square roots of unity.

\begin{lemma}
\label{lem:1-mod-4delta}
Let $n$ be a positive integer. Then the number of integers $0\leq b < 2n$ which satisfy $b^2 \equiv 1 \pmod{4n}$ is equal to
$
    \frac{1}{2}\left|
        \left(
            \bZ_{4n}^\times
        \right)_2
    \right|.
$
\end{lemma}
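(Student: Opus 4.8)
The plan is to identify the set of residues $0\le b<4n$ satisfying $b^2\equiv1\pmod{4n}$ with the set $\left(\bZ_{4n}^\times\right)_2$, and then to use a fixed-point-free involution to show that this set is split evenly between the lower half $[0,2n)$ and the upper half $[2n,4n)$.

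First I would observe that any integer $b$ with $b^2\equiv1\pmod{4n}$ is automatically coprime to $4n$: the congruence exhibits $b$ as its own multiplicative inverse modulo $4n$, so $\gcd(b,4n)=1$. Hence, identifying each residue class in $\bZ_{4n}$ with its representative in $\{0,1,\dots,4n-1\}$, the integers $0\le b<4n$ with $b^2\equiv1\pmod{4n}$ are exactly the elements of $\left(\bZ_{4n}^\times\right)_2$. It therefore suffices to prove that precisely half of these representatives lie in the range $0\le b<2n$.

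Next I would introduce the involution $\iota\colon b\mapsto 4n-b$, which is negation in $\bZ_{4n}$. Since $(-b)^2=b^2$, this map carries $\left(\bZ_{4n}^\times\right)_2$ to itself, and it is visibly an involution. The crucial point is that $\iota$ is fixed-point-free on this set: a fixed point would force $b=2n$, but $2n$ is not coprime to $4n$ for $n\ge1$, so $2n\notin\left(\bZ_{4n}^\times\right)_2$; likewise $0\notin\left(\bZ_{4n}^\times\right)_2$ because $0^2\equiv0\not\equiv1\pmod{4n}$. Consequently every element of the set lies in $\{1,\dots,2n-1\}\cup\{2n+1,\dots,4n-1\}$, and $\iota$ restricts to a bijection between these two halves, since $1\le b\le 2n-1$ if and only if $2n+1\le 4n-b\le 4n-1$.

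Finally, because $\iota$ pairs each element of $\left(\bZ_{4n}^\times\right)_2$ in the lower half with a distinct element in the upper half, the two halves have equal cardinality, and the number of $b$ with $0\le b<2n$ is $\frac{1}{2}\left|\left(\bZ_{4n}^\times\right)_2\right|$, as claimed. The only step calling for genuine care—and the main obstacle, modest as it is—is the verification that the involution has no fixed points together with the exclusion of the boundary values $0$ and $2n$; once these degenerate cases are ruled out, the even split follows at once.
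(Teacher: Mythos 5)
Your proof is correct and complete, but it runs on a different involution than the paper's. The paper pairs each counted integer $b \in [0,2n)$ with its translate $b+2n$: since $(b+2n)^2 = b^2 + 4nb + 4n^2 \equiv b^2 \pmod{4n}$, the integers $b$ and $b+2n$ represent two \emph{distinct} elements of $\left(\bZ_{4n}^\times\right)_2$, and conversely every square root of unity mod $4n$ reduces mod $2n$ to a counted $b$; this gives the $2$-to-$1$ correspondence with no case analysis at all, because $b \neq b+2n$ in $\bZ_{4n}$ is automatic. You instead pair $b$ with $4n-b$ (negation). Negation has the advantage of obviously preserving the set of square roots of unity for \emph{any} modulus, but it forces you to do the extra work your last paragraph acknowledges: ruling out the potential fixed points $0$ and $2n$, which is exactly where the specific form of the modulus enters (via $\gcd(2n,4n)=2n>1$). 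Note that your method would actually fail for the modulus $2$, where $\left(\bZ_{2}^\times\right)_2=\{1\}$ has odd order and $1$ is a fixed point of negation; the hypothesis $4n \geq 4$ is what saves you, and your proof correctly invokes it. In short: the paper's translation pairing trades your fixed-point check for the one-line congruence $(b+2n)^2 \equiv b^2$, making it slightly shorter, while your argument is the more generic "negation acts freely" pattern. Both are valid; yours is, if anything, written out more carefully than the paper's two-line proof, which leaves the converse direction (that every element of $\left(\bZ_{4n}^\times\right)_2$ arises from some counted $b$) implicit.
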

\begin{proof}
For each $b$ satisfying the hypothesis, the integers $b$ and $b + 2n$ represent distinct elements in $\left(\bZ_{4n}^\times\right)_2$. This proves the statement.
\end{proof}

Let us first count even overlattices of Type~\ref{item:split}.

\begin{lemma}
\label{lem:count-type-I}
Elements in $\cM_{S,T}$ of Type~\ref{item:split} exist only if $d'\equiv 2\pmod{3}$. If this condition holds, then there are
$
    \left|\left(
        \bZ_{4d'}^\times
    \right)_2\right|
$
many of them.
\end{lemma}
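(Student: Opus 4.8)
The plan is to translate the evenness of a Type~\ref{item:split} overlattice into congruence conditions on the parameters $b_1,b_2$ and then count the admissible pairs. Write $u_1\coloneqq\frac{b_1\ell+t_1}{3}$ and $u_2\coloneqq\frac{b_2\ell+t_2}{2d'}$ for the two generators of $L/(S\oplus T)$ appearing in \ref{item:split}. Since $S\oplus T$ is already even, the overlattice $L$ is even precisely when the subgroup $L/(S\oplus T)$ is isotropic for the discriminant form of $S\oplus T$ (Nikulin); concretely this amounts to $u_1^2,u_2^2\in2\bZ$ together with $u_1\cdot u_2\in\bZ$, since every element of $L/(S\oplus T)$ is an integral combination of $u_1$ and $u_2$ and the induced quadratic form on the subgroup is determined by these three values.

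First I would compute these quantities, using that $S\perp T$ inside $\mukaiH(\cA_X,\bZ)$ (so $\ell\cdot t_1=\ell\cdot t_2=0$) and that $t_1\cdot t_2=0$ by Lemma~\ref{lem:discT}, together with $\ell^2=-6d'$, $t_1^2=-6$, $t_2^2=6d'$. This yields
$$
    u_1^2=\frac{-2(b_1^2d'+1)}{3},
    \qquad
    u_2^2=\frac{3(1-b_2^2)}{2d'},
    \qquad
    u_1\cdot u_2=-b_1b_2.
$$
The last identity shows $u_1\cdot u_2\in\bZ$ automatically, so the cross term imposes no constraint and the two conditions decouple. From $\gcd(b_1,3)=1$ we have $b_1^2\equiv1\pmod3$, whence $u_1^2\in2\bZ$ holds iff $3\mid(d'+1)$, i.e.\ iff $d'\equiv2\pmod3$; this is independent of the particular value of $b_1$, and since $b_2=1$ always satisfies the second condition, it already gives the stated existence criterion. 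Assuming $d'\equiv2\pmod3$, so that $\gcd(3,d')=1$, the condition $u_2^2\in2\bZ$ reads $4d'\mid3(1-b_2^2)$, and cancelling the unit $3$ turns it into $b_2^2\equiv1\pmod{4d'}$; this in turn forces $\gcd(b_2,2d')=1$, so the coprimality half of \eqref{eqn:type-I} is automatic.

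It then remains to count admissible pairs $(b_1,b_2)$. I would first check that distinct normalized pairs give distinct overlattices: because $\gcd(3,2d')=1$, the group $L/(S\oplus T)$ is cyclic of order $6d'$ and splits uniquely into constituents of orders $3$ and $2d'$, which are $\langle u_1\rangle$ and $\langle u_2\rangle$, and $b_1,b_2$ are recovered as the $\ell$-components of these constituents. There are exactly two choices $b_1\in\{1,2\}$, both admissible once $d'\equiv2\pmod3$, and by Lemma~\ref{lem:1-mod-4delta} (applied with $n=d'$) the number of $0\le b_2<2d'$ with $b_2^2\equiv1\pmod{4d'}$ equals $\frac{1}{2}\left|\left(\bZ_{4d'}^\times\right)_2\right|$. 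Multiplying gives $\left|\left(\bZ_{4d'}^\times\right)_2\right|$ overlattices, as claimed. The main point to get right is the clean decoupling of evenness into the two separate congruences, and the delicate interplay is that the very hypothesis $d'\equiv2\pmod3$ forced by $u_1$ is exactly what makes $3$ a unit modulo $4d'$ and lets the $u_2$-condition simplify to the form handled by Lemma~\ref{lem:1-mod-4delta}.
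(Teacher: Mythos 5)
Your proof is correct and takes essentially the same route as the paper's: compute the squares of the two generators of $L/(S\oplus T)$, translate evenness into the congruences $d'\equiv 2\pmod 3$ and $b_2^2\equiv 1\pmod{4d'}$, and count the pairs $(b_1,b_2)$ via Lemma~\ref{lem:1-mod-4delta}. The only difference is that you explicitly verify points the paper leaves implicit --- integrality of the cross term $u_1\cdot u_2=-b_1b_2$, injectivity of the parametrization by $(b_1,b_2)$, and the fact that $3$ being a unit modulo $4d'$ is what allows cancelling it in the second condition --- all of which are correct and harmless additions.
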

\begin{proof}
An overlattice of Type~\ref{item:split} has the form
$$
    L = S + T + \left<
            \frac{b_1\ell + t_1}{3}
        \right> + \left<
            \frac{b_2\ell + t_2}{2d'}
        \right>
$$
where $(b_1,b_2)$ are as in \eqref{eqn:type-I}. Such a lattice is even, that is, belongs to $\cM_{S,T}$, if and only if
$$
    \left(
        \frac{b_1\ell + t_1}{3}
    \right)^2
    = -\frac{2}{3}\left(
        b_1^2d' + 1
    \right)
    \;\in\; 2\bZ
    \qquad\Longleftrightarrow\qquad
    d'\equiv 2\pmod{3}
$$
and
$$    
    \left(
        \frac{b_2\ell + t_2}{2d'}
    \right)^2
    = -\frac{3}{2d'}\left(
        b_2^2 - 1
    \right)
    \;\in\; 2\bZ
    \qquad\Longleftrightarrow\qquad
    b_2^2\equiv 1\pmod{4d'}.
$$
Note that the first equivalence follows from the condition $\gcd(b_1,3)=1$ in \eqref{eqn:type-I}.
From here, we see that even overlattices of Type~\ref{item:split} exist only if $d'\equiv 2\pmod{3}$. In this situation, the number of such lattices equals the number of pairs $(b_1,b_2)$ which satisfy
$$
    b_1 \in\{1,2\}
    \qquad\text{and}\qquad
    0 \leq b_2 < 2d'
    \quad\text{with}\quad
    b_2^2\equiv 1\pmod{4d'}.
$$
By Lemma~\ref{lem:1-mod-4delta}, the number of choices for $b_2$ is equal to
$
    \frac{1}{2}\left|
        \left(
            \bZ_{4d'}^\times
        \right)_2
    \right|.
$
Since there are two choices for $b_1$, the number of desired $(b_1,b_2)$ is equal to
$
    \left|
        \left(
            \bZ_{4d'}^\times
        \right)_2
    \right|.
$
\end{proof}

Recall that an overlattice of Type~\ref{item:cyclic} has the form
$$
    L = S + T + \left<
        \frac{b_3\ell + 2d'kt_1 + t_2}{6d'}
    \right>
$$
where $0\leq k < 3$ and $b_3$ is as in \eqref{eqn:type-II}. This lattice is even if and only if
\begin{equation}
\label{eqn:type-II-even}
    \left(
        \frac{b_3\ell + 2d'kt_1 + t_2}{6d'}
    \right)^2
    = -\frac{1}{6d'}\left(
        b_3^2 + 4d'k^2 - 1
    \right)
    \;\in\; 2\bZ.
\end{equation}
Let us count the number of such overlattices case-by-case.

\begin{lemma}
\label{lem:type-II-b3-k=0}
There are
$
    \frac{1}{2}\left|
        \left(
            \bZ_{12d'}^\times
        \right)_2
    \right|
$
many elements in $\cM_{S,T}$ of Type~\ref{item:cyclic} with $k = 0$.
\end{lemma}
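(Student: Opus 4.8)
The plan is to specialize the evenness criterion \eqref{eqn:type-II-even} to the case $k=0$ and then reduce the count to a direct application of Lemma~\ref{lem:1-mod-4delta}. Setting $k=0$ in \eqref{eqn:type-II-even}, the term $4d'k^2$ disappears and the parenthesized quantity becomes $b_3^2-1$. Hence the lattice is even precisely when $b_3^2-1$ is divisible by $2\cdot 6d' = 12d'$, that is, when $b_3^2\equiv1\pmod{12d'}$. Thus the elements of $\cM_{S,T}$ of Type~\ref{item:cyclic} with $k=0$ are in bijection with the integers $b_3$ subject to \eqref{eqn:type-II} that additionally satisfy this congruence.

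Next I would observe that the congruence $b_3^2\equiv1\pmod{12d'}$ already forces $b_3$ to be its own inverse modulo $12d'$, hence a unit modulo $12d'$ and therefore modulo $6d'$ as well. Consequently the coprimality requirement $\gcd(b_3,6d')=1$ appearing in \eqref{eqn:type-II} is automatic and imposes no further constraint. The count therefore equals simply the number of integers $0\le b_3<6d'$ with $b_3^2\equiv1\pmod{12d'}$. I would also note, for completeness, that the normalization $0\le b_3<6d'$ fixed when Type~\ref{item:cyclic} was set up provides exactly one representative per overlattice, so distinct admissible values of $b_3$ correspond to distinct elements of $\cM_{S,T}$.

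Finally I would invoke Lemma~\ref{lem:1-mod-4delta} with $n=3d'$, for which $2n=6d'$ and $4n=12d'$. The lemma then gives that the number of $0\le b_3<6d'$ with $b_3^2\equiv1\pmod{12d'}$ equals $\frac{1}{2}\left|\left(\bZ_{12d'}^\times\right)_2\right|$, which is exactly the asserted count.

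I do not expect a serious obstacle here. In contrast to Type~\ref{item:split} treated in Lemma~\ref{lem:count-type-I}, the $k=0$ case imposes no arithmetic condition on $d'$ modulo $3$, precisely because the contribution $4d'k^2$ vanishes; this is why the lemma holds unconditionally. The only point demanding a moment of care is the faithfulness of the $b_3$-parametrization, namely that no two admissible values of $b_3$ yield the same overlattice, which follows from the canonical choice of representatives already made in Section~\ref{sect:FM-overlat}.
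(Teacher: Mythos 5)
Your proof is correct and follows essentially the same route as the paper: specialize \eqref{eqn:type-II-even} to $k=0$ to get the congruence $b_3^2\equiv1\pmod{12d'}$, then apply Lemma~\ref{lem:1-mod-4delta} with $n=3d'$. The extra observations you make (that $\gcd(b_3,6d')=1$ is automatic, and that distinct admissible $b_3$ give distinct overlattices) are points the paper leaves implicit, and both are verified correctly.
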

\begin{proof}
When $k = 0$, condition~\eqref{eqn:type-II-even} reduces to
$$
    -\frac{1}{6d'}\left(
        b_3^2 - 1
    \right)
    \;\in\; 2\bZ
    \qquad\Longleftrightarrow\qquad
    b_3^2 \equiv 1 \pmod{12d'}.
$$
By Lemma~\ref{lem:1-mod-4delta}, the number of choices for $b_3$ is equal to
$
    \frac{1}{2}\left|
        \left(
            \bZ_{12d'}^\times
        \right)_2
    \right|.
$
\end{proof}

Now we consider the cases when $k=1,2$.

\begin{lemma}
\label{lem:type-II-b3-k-nonzero}
Elements in $\cM_{S,T}$ of Type~\ref{item:cyclic} with $k = 1$ exist only if $d'\equiv 0\pmod{3}$. If this holds, then there are
$
    \frac{1}{2}\left|
        \left(
            \bZ_{4d'}^\times
        \right)_2
    \right|
$
many of them. The same statement holds for $k = 2$.
\end{lemma}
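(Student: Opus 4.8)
The plan is to analyze the evenness condition~\eqref{eqn:type-II-even} in the case $k=1$ (the case $k=2$ being symmetric), which reads
$$
    -\frac{1}{6d'}\left(
        b_3^2 + 4d' - 1
    \right)\;\in\;2\bZ.
$$
First I would rewrite this as the congruence $b_3^2 + 4d' - 1\equiv 0\pmod{12d'}$, and then separate the modulus into its part coming from $12$ and its part coming from $d'$ via the Chinese remainder theorem, being careful about the shared factor of $3$ since we are in the regime $d'\equiv 0\pmod{3}$ is what we must detect. The key observation is that $b_3$ is a unit modulo $6d'$ by~\eqref{eqn:type-II}, so $\gcd(b_3,3)=1$ and hence $b_3^2\equiv 1\pmod 3$.

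The main point is to extract the divisibility obstruction. Reducing $b_3^2 + 4d' - 1\equiv 0\pmod{12d'}$ modulo $3$ gives $b_3^2 + d' - 1\equiv 0\pmod 3$, and since $b_3^2\equiv 1\pmod 3$ this forces $d'\equiv 0\pmod 3$. This is exactly the claimed necessary condition, so I would present this reduction as the first half of the proof: if $d'\not\equiv 0\pmod 3$, no such overlattice is even. The hard part will be confirming that once $d'\equiv 0\pmod 3$ the term $4d'$ genuinely shifts the count rather than merely reindexing it, so that the counting reduces cleanly to a square-root-of-unity count to a smaller modulus.

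For the counting half, assuming $d'\equiv 0\pmod 3$, I would show that the condition $b_3^2 \equiv 1 - 4d'\pmod{12d'}$ is equivalent, after absorbing the $3$-adic contribution, to $b_3^2\equiv 1\pmod{4d'}$ on an appropriate residue class. Concretely, write $d' = 3m$; then $12d' = 36m$ and $4d' = 12m$, and I expect the congruence to decouple into a condition $\pmod{4d'}$ together with a forced congruence $\pmod 3$ that is automatically satisfied (or pins down a unique residue for $b_3\bmod 3$) because of the unit condition $\gcd(b_3,6d')=1$. The upshot should be that the solutions $b_3$ with $0\le b_3<6d'$ are in bijection with solutions of $b_3^2\equiv 1\pmod{4d'}$ with $0\le b_3 < 2d'$, whose number is $\frac12\bigl|(\bZ_{4d'}^\times)_2\bigr|$ by Lemma~\ref{lem:1-mod-4delta}.

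Finally I would remark that the case $k=2$ yields the identical condition, since $4d'k^2 = 16d'\equiv 4d'\pmod{12d'}$ when $d'\equiv 0\pmod 3$ (indeed $16d' - 4d' = 12d'$), so~\eqref{eqn:type-II-even} is literally the same congruence for $k=1$ and $k=2$; thus the count and the necessary condition transfer verbatim. The step I anticipate as the genuine obstacle is the bookkeeping in the decoupling: making sure the range $0\le b_3<6d'$ together with $\gcd(b_3,6d')=1$ matches up correctly with the range $0\le b_2<2d'$ in Lemma~\ref{lem:1-mod-4delta}, so that the factor $\frac12$ and the smaller modulus $4d'$ both emerge without an off-by-a-factor error. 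I would double-check this by comparing the orders of the relevant discriminant-group images, as was done for~\eqref{eqn:surj-to-T*/T}.
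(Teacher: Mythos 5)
Your first half is correct and matches the paper: reducing $b_3^2 + 4d' - 1 \equiv 0 \pmod{12d'}$ modulo $3$ and using $\gcd(b_3,3)=1$ forces $d'\equiv 0\pmod{3}$; and your observation that $k=2$ yields literally the same congruence is also fine (indeed $16d'-4d'=12d'$ holds unconditionally, so your clause ``when $d'\equiv 0\pmod 3$'' is not even needed). The gap is in the counting half. Your proposed mechanism is a Chinese-remainder decoupling of the modulus $12d'$ into $4d'$ and $3$, with the mod-$3$ part ``automatically satisfied or pinning down a unique residue for $b_3\bmod 3$.'' This cannot work as stated: since $3\mid d'$, the two moduli share the factor $3$, and knowing $b_3$ modulo $4d'$ and modulo $3$ only determines $b_3$ modulo $\mathrm{lcm}(4d',3)=4d'$. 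In particular, no condition on $b_3\bmod 3$ can upgrade a congruence mod $4d'$ to one mod $12d'$, because $b_3\bmod 3$ is already determined by $b_3\bmod 4d'$. So the bijection you assert, between solutions $b_3\in[0,6d')$ of the full congruence and solutions of $b_4^2\equiv 1\pmod{4d'}$ with $b_4\in[0,2d')$ --- which is the correct statement, and exactly what the paper proves --- is left unproven; you flag this ``bookkeeping'' as the anticipated obstacle but never close it.

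What is actually needed (and what the paper does) is a fibering argument over the three lifts, not a CRT splitting: write $b_3 = b_4 + 2d'm$ with $0\le b_4 < 2d'$ and $m\in\{0,1,2\}$, noting that $\gcd(b_3,6d')=1$ is equivalent to $\gcd(b_4,2d')=1$ because $3\mid d'$. Reduction mod $4d'$ gives $b_4^2\equiv 1\pmod{4d'}$; conversely, writing $b_4^2 = 1 + 4d'r$, the full congruence mod $12d'$ becomes $4d'\left(r + mb_4 + d'm^2 + 1\right)\equiv 0\pmod{12d'}$, i.e.\ $r + mb_4 + 1\equiv 0\pmod{3}$ (using $3\mid d'$ again). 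Since $b_4$ is invertible mod $3$, exactly one $m\in\{0,1,2\}$ satisfies this --- not three and not zero. This unique-lift step is precisely why the final count is $\tfrac{1}{2}\left|\left(\bZ_{4d'}^\times\right)_2\right|$ rather than that number multiplied by $3$ or by $0$, and it is the step missing from your proposal; note also that the datum being pinned down is the lift $m=(b_3-b_4)/2d'\bmod 3$, not the residue $b_3\bmod 3$ as you suggest.
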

\begin{proof}
When $k = 1$ (resp. $k = 2$), condition~\eqref{eqn:type-II-even} is equivalent to
\begin{equation}
\label{eqn:mod-12d'}
    b_3^2 + 4d' - 1 \equiv 0 \pmod{12d'}.
\end{equation}
Recall from \eqref{eqn:type-II} that $\gcd(b_3,6d')=1$, whence $b_3^2\equiv 1\pmod{3}$, and the above relation modulo $3$ gives $d'\equiv 0\pmod{3}$. Now write 
$$
    b_3 = b_4 + 2d'm
    \quad\text{with}\quad
    0 \leq b_4 < 2d'.
$$
Then \eqref{eqn:type-II} holds if and only if $m\in\{{0,1,2}\}$ and $\gcd(b_4, 2d') = 1$. From \eqref{eqn:mod-12d'}, we get
\begin{equation}
\label{eqn:b3=b4+2d'm}
    b_3^2 + 4d' -1
    \equiv b_4^2 + 4d'(mb_4 + d'm^2 + 1) - 1
    \equiv 0 \pmod{12d'}.
\end{equation}
This relation modulo $4d'$ gives
\begin{equation}
\label{eqn:b4-sq}
    b_4^2\equiv 1\pmod{4d'}.
\end{equation}
We claim that for each such $b_4$, whether $m = 0,1,2$ is uniquely determined. Indeed, if we write $b_4^2 = 1 + 4d'r$ with $r$ an integer, then
$$
    b_4^2 + 4d'(mb_4 + d'm^2 + 1) - 1
    = 4d'(r + mb_4 + d'm^2 + 1).
$$
Inserting this into \eqref{eqn:b3=b4+2d'm} with $d'\equiv 0\pmod{3}$ in mind reduces the relation to
$$
    r + mb_4 + 1 \equiv 0 \pmod{3}.
$$
Then the claim follows as $b_4\not\equiv 0\pmod{3}$. By Lemma~\ref{lem:1-mod-4delta}, the number of $0\leq b_4 < 2d'$ satisfying \eqref{eqn:b4-sq} is equal to
$
    \frac{1}{2}\left|
        \left(
            \bZ_{4d'}^\times
        \right)_2
    \right|.
$
This completes the proof.
\end{proof}

\begin{proof}[Proof of Theorem~\ref{mainthm-1}]
By Lemmas~\ref{lem:count-type-I}, \ref{lem:type-II-b3-k=0}, \ref{lem:type-II-b3-k-nonzero}, the numbers of elements in $\cM_{S,T}$ of different types and values of $d'$ can be organized into a table:
$$\def\arraystretch{1.4}
\begin{array}{|c|c|c|c|}
    \hline
    & d'\equiv 0\pmod{3}
    & d'\equiv 1\pmod{3}
    & d'\equiv 2\pmod{3}
    \\[2pt]
    \hline
    \ref{item:split}
    & 0
    & 0
    & \left|\left(\bZ_{4d'}^\times\right)_2\right|
    \\[2pt]
    \hline
    \ref{item:cyclic}\,\text{ with }\,k=0
    & \frac{1}{2}\left|\left(\bZ_{12d'}^\times\right)_2\right|
    & \frac{1}{2}\left|\left(\bZ_{12d'}^\times\right)_2\right|
    & \frac{1}{2}\left|\left(\bZ_{12d'}^\times\right)_2\right|
    \\[2pt]
    \hline
    \ref{item:cyclic}\,\text{ with }\,k=1
    & \frac{1}{2}\left|\left(\bZ_{4d'}^\times\right)_2\right|
    & 0
    & 0
    \\[2pt]
    \hline
    \ref{item:cyclic}\,\text{ with }\,k=2
    & \frac{1}{2}\left|\left(\bZ_{4d'}^\times\right)_2\right|
    & 0
    & 0
    \\[2pt]
    \hline
    |\cM_{S,T}|
    & \frac{3}{2}\left|\left(\bZ_{4d'}^\times\right)_2\right|
    & \left|\left(\bZ_{4d'}^\times\right)_2\right|
    & 2\left|\left(\bZ_{4d'}^\times\right)_2\right|
    \\[2pt]
    \hline
\end{array}$$
The formulas can then be deduced from Lemma~\ref{lem:FM-overlat} and a direct computation.
\end{proof}

\bigskip
\bibliography{FMCubic}
\bibliographystyle{alpha}

\ContactInfo
\end{document}